\newtheorem{algorithm}{Algorithm}
\newcommand{\abs}[1]{\left\vert#1\right\vert}
\newcommand{\To}{\rightarrow}
\newcommand{\bsgamma}{\boldsymbol{\gamma}}
\newcommand{\bsalpha}{\boldsymbol{\alpha}}
\newcommand{\bsk}{\boldsymbol{k}}
\newcommand{\bsl}{\boldsymbol{l}}
\newcommand{\bsh}{\boldsymbol{h}}
\newcommand{\bsx}{\boldsymbol{x}}
\newcommand{\bsz}{\boldsymbol{z}}
\newcommand{\bsg}{\boldsymbol{g}}
\newcommand{\bss}{\boldsymbol{s}}
\newcommand{\bsy}{\boldsymbol{y}}
\newcommand{\cP}{{\cal P}}
\newcommand{\wal}{{\rm wal}}
\newcommand{\trig}{{\rm trig}}
\newcommand{\icomp}{\mathtt{i}}
\newcommand{\rd}{\,\mathrm{d}}
\newcommand{\FF}{\mathbb{F}}
\newcommand{\Natural}{\mathbb{N}}
\newcommand{\Integer}{\mathbb{Z}}
\newcommand{\Real}{\mathbb{R}}
\newcommand{\Complex}{\mathbb{C}}
\newcommand{\cS}{{\cal S}}
\newcommand{\ZZ}{\mathbb{Z}}
\newcommand{\ee}{\mathrm{e}}
\newcommand{\cH}{{\cal H}}
\renewenvironment{proof}{\begin{trivlist}
    \item[\hskip\labelsep{\bf Proof.}]}{$\hfill\Box$\end{trivlist}}
\begin{document}
\title*{Tractability of multivariate integration in hybrid function spaces}
\author{Peter
Kritzer\thanks{P.~Kritzer is supported by 
the Austrian Science Fund (FWF), Projects P23389-N18 and F05506-26. The latter is part of the Special Research Program 
``Quasi-Monte Carlo Methods: Theory and Applications''.} \and Friedrich
Pillichshammer\thanks{F.~Pillichshammer is supported by
the Austrian Science Fund (FWF) Project F5509-N26, which is part of the Special Research Program 
``Quasi-Monte Carlo Methods: Theory and Applications''.}}

\titlerunning{Tractability of integration in hybrid spaces}

\institute{Department of Financial Mathematics, Johannes Kepler University
Linz, Altenbergerstr. 69, 4040 Linz, Austria. \email{\{peter.kritzer, friedrich.pillichshammer\}(AT)jku.at}}

\maketitle

\abstract{We consider tracatability of integration in reproducing kernel 
Hilbert spaces which are a tensor product of a Walsh space and a Korobov space. 
The main result provides necessary and sufficient conditions for weak, polynomial and strong polynomial tractability.}

\section{Introduction}

In this paper we study multivariate integration $$I_s(f)=\int_{[0,1]^s} f(\bsx) \rd \bsx$$ in 
reproducing kernel Hilbert spaces $\cH(K)$ of functions $f:[0,1]^s \rightarrow \Real$, equipped with the norm $\|\cdot \|_{\cH(K)}$, 
where $K$ denotes the reproducing kernel. 
We refer to Aronszajn~\cite{A50} for an introduction to the theory of reproducing kernel Hilbert spaces. 
Without loss of generality, see, e.g., \cite{NW08,TWW88}, we can restrict ourselves
to approximating $I_s(f)$ by means of {\it linear algorithms} $Q_{N,s}$ of the form
\[Q_{N,s}(f,\cP):=\sum_{k=0}^{N-1} q_k f(\bsx_k),\]
with coefficients $q_k\in \Complex$ and deterministically chosen sample points $\bsx_k\in[0,1)^s$.
In this paper we further restrict ourselves to considering only $q_k$ of the form $q_k=1/N$ for all
$0 \le k < N$ in which case one speaks of {\it quasi-Monte Carlo (QMC)
algorithms}. QMC algorithms are often used in practical applications especially if $s$ is large.
We are interested in studying the {\it worst-case integration error},
$$
e(\cH(K),\cP)=\sup_{f \in \cH(K) \atop
\|f\|_{\cH(K)} \le 1} \abs{I_s(f)-Q_{N,s}(f,\cP)}.$$ 
Let $e(N,s)$ be the {\it $N$th minimal QMC worst-case error},
$$
e(N,s)=\inf_{\cP}e(\cH(K),\cP),
$$
where the infimum is extended over all $N$-element point sets in $[0,1)^s$. The initial error $e(0,s)$ is defined as $$e(0,s)=\sup_{f \in \cH(K) \atop
\|f\|_{\cH(K)} \le 1} \abs{I_s(f)}$$ and is used as a reference value. 
In this paper we are interested in the dependence of the worst-case error on the dimension $s$. 
To study this dependence systematically we consider the so-called {\it information complexity} defined as 
$$N_{\min}(\varepsilon,s)=\min\{N \in \Natural \ : \ e(N,s) \le \varepsilon \, e(0,s)\},$$ 
which is the minimal number of points required to reduce the initial error by a factor of $\varepsilon$.

We would like to avoid cases where the information complexity $N_{\min}(\varepsilon,s)$
grows exponentially or even faster with the dimension $s$ or with $\varepsilon^{-1}$. 
To quantify the behavior of the information complexity we use the following notions of {\it tractability}.

We say that the integration problem in $\cH(K)$ is 
\begin{itemize}
 \item {\it weakly QMC-tractable}, if $$\lim_{s+\varepsilon^{-1} \rightarrow \infty} \frac{\log N_{\min}(\varepsilon,s)}{s+\varepsilon^{-1}}=0;$$
 \item {\it polynomially QMC-tractable}, if there exist non-negative numbers $c,p$, 
  and $q$ such that $$N_{\min}(\varepsilon,s) \le c s^q \varepsilon^{-p};$$
 \item {\it strongly polynomially QMC-tractable}, if there exist non-negative numbers 
 $c$ and $p$ such that $$N_{\min}(\varepsilon,s) \le c \varepsilon^{-p}.$$
\end{itemize}
Of course, strong polynomial QMC-tractability implies polynomial QMC-tractability which in turn implies weak QMC-tractability. 
If we do not have weak QMC-tractability, then we say that the integration problem in $\cH(K)$ is {\it intractable}.

In the existing literature, many authors have studied tractability (since we only deal with QMC-rules here we omit the prefix 
``QMC'' from now on) of integration in many different reproducing kernel Hilbert spaces. 
The current state of the art of tractability theory is summarized in the three volumes of the book of 
Novak and Wo\'{z}niakowski \cite{NW08,NW10,NW12} 
which we refer to for extensive information on this subject and further references. Most of these investigations have in common that 
reproducing kernel Hilbert spaces 
are tensor products of one-dimensional spaces whose kernels are all of the same type (but maybe equipped with different weights). 
In this paper we consider the case where the reproducing kernel is a tensor product of spaces with kernels of different type. 
We call such spaces {\it hybrid spaces}. As far as we are aware of, this problem has not been studied in the literature so far. 
This paper is a first attempt in this direction.  

In particular, we consider the tensor product of an $s_1$-dimensional weighted Walsh space and an $s_2$-dimensional weighted Korobov 
space (the exact definitions will be given in the next section). The study of such spaces could be important in view of the integration 
of functions which are periodic with respect to some of the components and, for example, piecewise constant with respect to the remaining components. 
Moreover, it has been pointed out by several scientists (see, e.g., \cite{K13,L14}) that hybrid integration problems may be relevant for 
certain integration problems in applications.

From the analytical point of view, it is very challenging to deal with integration in hybrid spaces. 
The reason for this is the rather complex interplay between the different analytic and algebraic structures of the kernel functions. 
In the present study we are concerned with Fourier analysis carried out simultaneously with respect to the Walsh and the trigonometric function system. 
The problem is also closely related to the study of {\it hybrid point sets} which received much attention in recent times 
(see, for example, \cite{H11,HellK11,HK11,HKLP09,HL11,K11,KLP13,KP13}).

The paper is organized as follows. In Section~\ref{sec_space} we introduce the Hilbert space under consideration 
in this paper. The main result states necessary and sufficient conditions for various notions of tractability and 
is stated in Section~\ref{sec_main_rsult}. In Section~\ref{pr_nec} we prove the 
necessary conditions and in Section~\ref{pr_suf} the sufficient ones.

\section{The Hilbert space}\label{sec_space}

\subsection{Basic Notation}

Let $k\in\Natural_0$ with $b$-adic representation
$k=\sum_{i=0}^\infty \kappa_i b^i$, $\kappa_i \in
\{0,\ldots,b-1\}$. Furthermore, let $x\in[0,1)$ with $b$-adic
representation $x = \sum_{i=1}^\infty \xi_i b^{-i}$,
$\xi_i\in\{0,\ldots,b-1\}$, unique in the sense that infinitely many
of the $\xi_i$ differ from $b-1$. If $\kappa_a\ne 0$ is the most significant
nonzero digit of $k$, we define the $k$th {\it Walsh function} $\wal_{k}:[0,1)
\To \Complex$ (in base $b$) by
\[\wal_{k}(x):=
  \ee \left(\frac{\xi_1 \kappa_0 + \cdots + \xi_{a+1} \kappa_a}{b}\right),\]
where $\ee(v):=\exp(2\pi\icomp v)$. For dimension $s \ge 2$ and
vectors $\bsk = (k_1, \ldots, k_{s}) \in \Natural_0^{s}$ and $\bsx =
(x_1,\ldots,x_s) \in [0,1)^{s}$ we define the $\bsk$th Walsh function $\wal_{\bsk} : [0,1)^{s} \To
\Complex$ by $\wal_{\bsk}(\bsx):=\prod_{j=1}^{s} \wal_{k_j}(x_j)$.

\medskip

Furthermore, for $\bsl\in \Integer^{s}$ and $\bsy \in \Real^{s}$ we define the $\bsl$th trigonometric function by 
$\ee_{\bsl}(\bsy):=\ee(\bsl \cdot \bsy)$, where ``$\cdot$'' denotes the usual dot product.
\medskip

We define two functions $r^{(1)}, r^{(2)}$: let $\alpha>1$ and $\gamma>0$ be reals and let $\bsgamma=(\gamma_j)_{j \ge 1}$ be a sequence of positive reals.
\begin{itemize}
\item For integer $b\ge 2$, and $k\in \Natural_0$, $\alpha>1$ let
\[r_{\alpha,\gamma}^{(1)}(k):=\begin{cases}
                        1 & \mbox{if $k=0$,}\\
		        \gamma b^{-\alpha \psi_b (k)} & \mbox{if $k\neq 0$, where $\psi_b (k):=\lfloor \log_b k \rfloor$.}
                       \end{cases}
\]
For $\bsk=(k_1,\ldots,k_{s})\in\Natural_0^{s}$ let $r^{(1)}_{\alpha,\bsgamma}(\bsk):=\prod_{j=1}^{s} r_{\alpha,\gamma_j} (k_j)$.
Even though the parameter $b$ occurs in the definition of $r_{\alpha,\gamma}^{(1)}$, we do not explicitly include it in our notation 
as the choice of $b$ will usually be clear from the context.
\item For $l\in\Integer$ let
\[r^{(2)}_{\alpha,\gamma}(l):=\begin{cases}
                        1 & \mbox{if $l=0$,}\\
		        \gamma\abs{l}^{-\alpha} & \mbox{if $l\neq 0$}.
                       \end{cases}
\]
For $\bsl=(l_1,\ldots,l_{s})\in\Integer^{s}$ let $r^{(2)}_{\alpha,\bsgamma}(\bsl):=\prod_{j=1}^{s} r_{\alpha,\gamma_j} (l_j)$.
\end{itemize}

\subsection{Definition of the Hilbert space}

Let $s_1,s_2\in \Natural_0$ such that $s_1+s_2\ge 1$. We write $\bss=(s_1,s_2)$. 
For $\bsx=(x_1,\ldots,x_{s_1})\in [0,1)^{s_1}$ and $\bsy=(y_1,\ldots,y_{s_2})\in[0,1)^{s_2}$,
we use the short hand $(\bsx,\bsy)$ for $(x_1,\ldots,x_{s_1},y_1,\ldots,y_{s_2})\in [0,1)^{s_1+s_2}$.  

Let $\bsgamma^{(1)}=(\gamma_j^{(1)})_{j \ge 1}$ and $\bsgamma^{(2)}=(\gamma_j^{(2)})_{j \ge 1}$ be
non-increasing sequences of positive real numbers. We write $\bsgamma$ for the tuple $(\bsgamma^{(1)},\bsgamma^{(2)})$. 
Furthermore, let $\alpha_1,\alpha_2\in\Real$, with $\alpha_1,\alpha_2>1$ and write $\bsalpha=(\alpha_1,\alpha_2)$.

We first define a function $K_{\bss,\bsalpha,\bsgamma}:[0,1]^{s_1+s_2}\times[0,1]^{s_1+s_2}\To \Complex$ 
(which will be the kernel function of a Hilbert space, as we shall see later) by
\begin{eqnarray*}
\lefteqn{K_{\bss,\bsalpha,\bsgamma}((\bsx,\bsy),(\bsx',\bsy'))}\\
& := &\sum_{\bsk\in\Natural_0^{s_1}}
  \sum_{\bsl\in\Integer^{s_2}}
  r^{(1)}_{\alpha_1,\bsgamma^{(1)}}(\bsk) r^{(2)}_{\alpha_2,\bsgamma^{(2)}}(\bsl) 
  \wal_{\bsk}(\bsx) \ee_{\bsl} (\bsy)\overline{\wal_{\bsk}(\bsx') \ee_{\bsl} (\bsy')}
\end{eqnarray*}
for $(\bsx,\bsy),(\bsx',\bsy')\in [0,1]^{s_1+s_2}$ (to be more precise, we should write $\bsx,\bsx'\in[0,1]^{s_1}$ and $\bsy,\bsy'\in[0,1]^{s_2}$; 
from now on, when using the notation $(\bsx,\bsy)\in [0,1]^{s_1+s_2}$, we shall always tacitly assume that $\bsx\in[0,1]^{s_1}$ and 
$\bsy\in[0,1]^{s_2}$).

Note that $K_{\bss,\bsalpha,\bsgamma}$ can be written as
\begin{equation}\label{eqkernelproduct}
K_{\bss,\bsalpha,\bsgamma}((\bsx,\bsy),(\bsx',\bsy'))=
K_{s_1,\alpha_1,\bsgamma^{(1)}}(\bsx,\bsx')K_{s_2,\alpha_2,\bsgamma^{(2)}}(\bsy,\bsy'),
\end{equation}
where $K_{s_1,\alpha_1,\bsgamma^{(1)}}$ is the reproducing kernel of a Hilbert space based on Walsh 
functions. This space is defined as 
\[\cH(K_{s_1,\alpha_1,\bsgamma^{(1)}}):=
\left\{f=\sum_{\bsk \in \Natural_0^{s_1}} \widehat{f}_{\wal}(\bsk) \wal_{\bsk} \, : \, \|f\|_{s_1,\alpha_1,\bsgamma^{(1)}} < \infty\right\},\]
where the $\widehat{f}_{\wal}(\bsk):= \int_{[0,1]^{s_1}} f(\bsx)  \overline{\wal_{\bsk}(\bsx)} \rd \bsx$ are the 
Walsh coefficients of $f$ and where $\|f\|_{s_1,\alpha_1,\bsgamma^{(1)}}=\langle f, f \rangle_{s_1,\alpha_1,\bsgamma^{(1)}}^{1/2}$, 
with the inner product   
\[\langle f, g \rangle_{s_1,\alpha_1,\bsgamma^{(1)}} 
= \sum_{\bsk \in \Natural_0^{s_1}} (r_{\alpha_1,\bsgamma^{(1)}}^{(1)}(\bsk))^{-1} \widehat{f}_{\wal}(\bsk)  \overline{\widehat{g}_{\wal}(\bsk)}.\] 
This so-called {\it Walsh space} was first introduced and studied in \cite{DP05}.

Furthermore, $K_{s_2,\alpha_2,\bsgamma^{(2)}}$ is the reproducing kernel of a Hilbert space based on trigonometric
functions. This second function space is defined as
\[\cH(K_{s_2,\alpha_2,\bsgamma^{(2)}}):=
\left\{f=\sum_{\bsl \in \Integer_0^{s_2}} \widehat{f}_{\trig}(\bsl) \ee_{\bsl} \, : \, \|f\|_{s_2,\alpha_2,\bsgamma^{(2)}} < \infty\right\},\]
where the $\widehat{f}_{\trig}(\bsl):= \int_{[0,1]^{s_2}} f(\bsy)  \overline{\ee_{\bsl}(\bsy)} \rd \bsy$ are the 
Fourier coefficients of $f$ and where $\|f\|_{s_2,\alpha_2,\bsgamma^{(2)}}=\langle f, f \rangle_{s_2,\alpha_2,\bsgamma^{(2)}}^{1/2}$, 
with the inner product   
\[\langle f, g \rangle_{s_2,\alpha_2,\bsgamma^{(2)}} 
= \sum_{\bsh \in \Integer^{s_2}} (r_{\alpha_2,\bsgamma^{(2)}}^{(2)}(\bsl))^{-1} \widehat{f}_{\trig}(\bsk)  \overline{\widehat{g}_{\trig}(\bsk)}.\] This so-called {\it Korobov space} is studied in many papers. We refer to \cite{NW10,SW01} and the references therein for further information. 

Furthermore, \cite[Part~I, Section~8, Theorem~I, p.~361]{A50} implies that $K_{\bss,\bsalpha,\bsgamma}$ 
is the reproducing kernel of the tensor product of 
the spaces $\cH(K_{s_1,\alpha_1,\bsgamma^{(1)}})$ and $\cH(K_{s_2,\alpha_2,\bsgamma^{(2)}})$, 
i.e., of the space 
\[\cH(K_{\bss,\bsalpha,\bsgamma})=\cH(K_{s_1,\alpha_1,\bsgamma^{(1)}}) \otimes \cH(K_{s_2,\alpha_2,\bsgamma^{(2)}}).\]
The elements of $\cH(K_{\bss,\bsalpha,\bsgamma})$ are defined on $[0,1]^{s_1+ s_2}$, and the space is
equipped with the inner product
\[
  \langle f, g \rangle_{\bss,\bsalpha,\bsgamma}
= \sum_{\bsk\in\Natural_0^{s_1}}\sum_{\bsl \in \Integer_0^{s_2}}
  (r^{(1)}_{\alpha_1,\bsgamma^{(1)}}(\bsk))^{-1} (r^{(2)}_{\alpha_2,\bsgamma^{(2)}}(\bsl))^{-1} 
\widehat{f}(\bsk,\bsl)  \overline{\widehat{g}(\bsk,\bsl)},
\]
where
\[
   \widehat{f}(\bsk,\bsl) := \int_{[0,1]^{s_1+s_2}} f(\bsx,\bsy)  \overline{\wal_{\bsk}(\bsx) \ee_{\bsl}(\bsy)} \rd \bsx \rd \bsy.
\]
The norm in $\cH(K_{\bss,\bsalpha,\bsgamma})$ is given by $||f||_{\bss,\bsalpha,\bsgamma} := \langle f, f \rangle_{\bss,\bsalpha,\bsgamma}^{1/2}$. 

\medskip 

We study the problem of numerically integrating a function $f\in \cH(K_{\bss,\bsalpha,\bsgamma})$, i.e., we would like
to approximate
\[I_{\bss} (f)=\int_{[0,1]^{s_1}}\int_{[0,1]^{s_2}} f(\bsx,\bsy)\rd \bsx\rd\bsy.\]
We use a QMC rule based on a point set $\cS_{N,\bss}=((\bsx_n,\bsy_n))_{n=0}^{N-1}\subseteq [0,1)^{s_1+s_2}$, 
so we approximate $I_{\bss} (f)$ by 
\[\frac{1}{N}\sum_{n=0}^{N-1} f(\bsx_n,\bsy_n).\]
Using \cite[Proposition~2.11]{DP10} we obtain that the squared worst-case integration error of functions from $\cH(K_{\bss,\bsalpha,\bsgamma})$ 
is given by 
\begin{equation}\label{wcerror}
e^2(\cH(K_{\bss,\bsalpha,\bsgamma}),\cS_{N,\bss})=-1+\frac{1}{N^2} \sum_{n,n'=0}^{N-1} 
K_{\bss,\bsalpha,\bsgamma}((\bsx_n,\bsy_n),(\bsx_{n'},\bsy_{n'}))
\end{equation}
and the initial error $e(0,s_1+s_2)$ is one for all $s_1$, $s_2$, which means that the integration problem is well normalized.

\section{The main result}\label{sec_main_rsult}

The main result of this paper states necessary and sufficient conditions for the various notions of tractability.
\begin{theorem}\label{thm_main}
We have strong polynomial QMC-tractability of multivariate integration in $\cH(K_{\bss,\bsalpha,\bsgamma})$ iff 
\begin{equation}\label{eqcondstrongtract}
 \lim_{(s_1 +s_2)\To\infty}\left(\sum_{j=1}^{s_1}\gamma_j^{(1)} + \sum_{j=1}^{s_2}\gamma_j^{(2)}\right)<\infty.
\end{equation}
We have polynomial QMC-tractability of multivariate integration in $\cH(K_{\bss,\bsalpha,\bsgamma})$ iff 
\begin{equation}\label{eqcondtract}
 \lim_{(s_1 +s_2)\To\infty}\left(\frac{\sum_{j=1}^{s_1}\gamma_j^{(1)}}{\log s_1} + \frac{\sum_{j=1}^{s_2}\gamma_j^{(2)}}{\log s_2}\right)<\infty.
\end{equation}
We have weak QMC-tractability of multivariate integration in $\cH(K_{\bss,\bsalpha,\bsgamma})$ iff 
\begin{equation}\label{eqcondweaktract}
 \lim_{(s_1 +s_2)\To\infty}\left(\frac{\sum_{j=1}^{s_1}\gamma_j^{(1)}}{s_1} + \frac{\sum_{j=1}^{s_2}\gamma_j^{(2)}}{s_2}\right)=0.
\end{equation}
\end{theorem}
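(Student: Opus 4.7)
The proof splits cleanly into sufficiency (an upper bound for the minimal QMC error) and necessity (a lower bound). I plan to dispatch sufficiency by a standard random-averaging argument that exploits the tensor product structure in \eqref{eqkernelproduct}, and to obtain necessity by reducing to the pure weighted Walsh and pure weighted Korobov subproblems via the embeddings $s_2=0$ and $s_1=0$.

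For sufficiency (Section~\ref{pr_suf}) I would sample $N$ points independently and uniformly from $[0,1)^{s_1+s_2}$ and take the expectation in \eqref{wcerror}. Using that $\int_0^1 \wal_k(x)\rd x=0$ for $k\neq 0$ and $\int_0^1 \ee_l(y)\rd y=0$ for $l\neq 0$, the off-diagonal terms contribute $1$ each and the diagonal is the constant $\prod_{j=1}^{s_1}(1+\gamma_j^{(1)} C_1)\prod_{j=1}^{s_2}(1+\gamma_j^{(2)} C_2)$ with $C_1:=(b-1)/(1-b^{1-\alpha_1})$ and $C_2:=2\zeta(\alpha_2)$, both finite because $\alpha_1,\alpha_2>1$. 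The result is
\[
\mathbb{E}[e^2(\cH(K_{\bss,\bsalpha,\bsgamma}),\cS_{N,\bss})]=\frac{1}{N}\left(\prod_{j=1}^{s_1}(1+\gamma_j^{(1)}C_1)\prod_{j=1}^{s_2}(1+\gamma_j^{(2)}C_2)-1\right),
\]
so by pigeonhole some point set $\cS^\ast$ achieves a squared error bounded by the right-hand side. The inequality $1+x\le \ee^x$ and the notation $C:=\max(C_1,C_2)$ then give the uniform estimate $e^2(\cS^\ast)\le \frac{1}{N}\exp(C(\sum_{j=1}^{s_1}\gamma_j^{(1)}+\sum_{j=1}^{s_2}\gamma_j^{(2)}))$. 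The three sufficiency statements follow by routine bookkeeping: under \eqref{eqcondstrongtract} the exponent is uniformly bounded so $N=\lceil c\eps^{-2}\rceil$ works; under \eqref{eqcondtract} it is $O(\log(s_1 s_2))$, yielding polynomial $N\le c(s_1+s_2)^{\tau}\eps^{-2}$; and under \eqref{eqcondweaktract} it is $o(s_1+s_2)$, forcing $\log N_{\min}/(s_1+s_2+\eps^{-1})\to 0$.

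For necessity (Section~\ref{pr_nec}) I would use the factorization \eqref{eqkernelproduct}. Setting $s_2=0$ recovers exactly the weighted Walsh space $\cH(K_{s_1,\alpha_1,\bsgamma^{(1)}})$, and setting $s_1=0$ recovers the weighted Korobov space $\cH(K_{s_2,\alpha_2,\bsgamma^{(2)}})$; any notion of tractability of the hybrid problem is inherited by both subproblems. Inserting the known necessary conditions for these classical spaces (e.g.\ \cite{DP05,NW10}) into the two embeddings pins down precisely the growth behaviour of $\sum_{j=1}^{s_1}\gamma_j^{(1)}$ and of $\sum_{j=1}^{s_2}\gamma_j^{(2)}$ separately, and combining the two inequalities delivers \eqref{eqcondstrongtract}, \eqref{eqcondtract} and \eqref{eqcondweaktract} respectively.

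The hard part is the necessity direction: one has to produce a lower bound on the worst-case error that holds for every $N$-point configuration, not just for a particular one. The embedding reduction above outsources the bulk of this work to the pure-case results. If a self-contained argument is preferred, I would instead work with the Parseval-type identity $e^2(\cS)=\sum_{(\bsk,\bsl)\neq(\bszero,\bszero)} r^{(1)}_{\alpha_1,\bsgamma^{(1)}}(\bsk)\,r^{(2)}_{\alpha_2,\bsgamma^{(2)}}(\bsl)\,|\frac{1}{N}\sum_{n=0}^{N-1}\wal_{\bsk}(\bsx_n)\ee_{\bsl}(\bsy_n)|^2$, restrict the sum to single-coordinate index vectors, and average over digital and continuous shifts of the point set to conclude that low-order contributions cannot simultaneously cancel across all coordinates, thereby recovering the lower bounds on $\sum\gamma_j^{(1)}$ and $\sum\gamma_j^{(2)}$ required for \eqref{eqcondstrongtract}--\eqref{eqcondweaktract}.
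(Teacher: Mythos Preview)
Your proposal is correct, but both halves take different routes from the paper.

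\textbf{Sufficiency.} The paper does not use a probabilistic averaging argument; instead it constructs explicit QMC node sets by a component-by-component (CBC) algorithm that couples a polynomial lattice rule (for the Walsh block) with an ordinary lattice rule (for the Korobov block), and proves by induction (this is Theorem~\ref{thmcbc}, with the full proof in the Appendix) that the resulting squared error is at most $\frac{2}{N}\prod_{j=1}^{s_1}(1+2\gamma_j^{(1)}\mu(\alpha_1))\prod_{j=1}^{s_2}(1+4\gamma_j^{(2)}\zeta(\alpha_2))$. Your random-averaging argument produces the same product bound (with slightly better constants) in a couple of lines and is perfectly sufficient to establish Theorem~\ref{thm_main}; the paper's approach, however, is constructive and is in fact the main technical content of the article.

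\textbf{Necessity.} Your embedding reduction (set $s_2=0$ or $s_1=0$ and invoke the known necessary conditions from \cite{DP05,SW01}) is valid, since the hybrid space with one block empty coincides with the pure Walsh or Korobov space and tractability is inherited under restriction. The paper instead argues self-containedly: it observes that the product kernel $K_{\bss,\bsalpha,\bsgamma}$ is pointwise nonnegative (each factor is), drops the off-diagonal terms in \eqref{wcerror}, and obtains the uniform lower bound
\[
e^2(\cH(K_{\bss,\bsalpha,\bsgamma}),\cS_{N,\bss})\ge -1+\frac{1}{N}\prod_{j=1}^{s_1}\bigl(1+\gamma_j^{(1)}\mu(\alpha_1)\bigr)\prod_{j=1}^{s_2}\bigl(1+2\gamma_j^{(2)}\zeta(\alpha_2)\bigr)
\]
for every $N$-point set (Theorem~\ref{thmlowerbound}). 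The three necessary conditions are then read off from the resulting lower bound on $N_{\min}$ by standard log manipulations. Your sketched alternative via shift-averaging and single-coordinate index restriction is considerably more involved than what is needed; the kernel-nonnegativity trick is the right shortcut here.
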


The necessity of the conditions in Theorem~\ref{thm_main} will be proven in Section~\ref{pr_nec} and the sufficiency in  Section~\ref{pr_suf}. 
In the latter section we will see that the notions of tractability can be achieved by using so-called hybrid point sets made 
of polynomial lattice point sets and of classical lattice point sets. We will construct these by a component-by-component algorithm.

\section{Proof of the necessary conditions}\label{pr_nec}

First we prove the following theorem.

\begin{theorem}\label{thmlowerbound}
For any QMC rule using a point set $\cS_{N,\bss}=((\bsx_n,\bsy_n))_{n=0}^{N-1}\subseteq [0,1)^{s}$, we have
\begin{equation}\label{eqthmlowerbound}
e^2(\cH(K_{\bss,\bsalpha,\bsgamma}),\cS_{N,\bss})\ge -1 +
\frac{1}{N}\left(\prod_{j=1}^{s_1}(1+\gamma_j^{(1)} \mu (\alpha_1))\right)\left(\prod_{j=1}^{s_2}(1+2\gamma_j^{(2)}\zeta(\alpha_2))\right),
\end{equation}
where $\mu (\alpha):=\frac{b^{\alpha} (b-1)}{b^{\alpha}-b}$ for $\alpha>1$, and where $\zeta$ denotes the Riemann zeta function. 
\end{theorem}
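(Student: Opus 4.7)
My plan is to start from identity (\ref{wcerror}) and to lower-bound the double kernel sum by its $n=n'$ contribution.

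First I would compute the constant value of the kernel on its diagonal. Using $|\wal_{\bsk}(\bsx)|=|\ee_{\bsl}(\bsy)|=1$, the series defining $K_{\bss,\bsalpha,\bsgamma}((\bsx,\bsy),(\bsx,\bsy))$ collapses to $\sum_{\bsk\in\Natural_0^{s_1},\bsl\in\Integer^{s_2}} r^{(1)}_{\alpha_1,\bsgamma^{(1)}}(\bsk)\,r^{(2)}_{\alpha_2,\bsgamma^{(2)}}(\bsl)$ and factorises over coordinates. Each Walsh-side factor is $1+\gamma_j^{(1)}\sum_{k\ge 1}b^{-\alpha_1\psi_b(k)}$; grouping indices $k$ by their common level $\psi_b(k)=a$ (yielding $(b-1)b^a$ terms at level $a$) reduces the inner sum to a geometric series that evaluates to $\mu(\alpha_1)$. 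Each Korobov-side factor is $1+\gamma_j^{(2)}\sum_{l\ne 0}|l|^{-\alpha_2}=1+2\gamma_j^{(2)}\zeta(\alpha_2)$ by definition of the Riemann zeta function. Taking the product over $j$ gives the constant diagonal value $K_{\mathrm{diag}}:=\prod_{j=1}^{s_1}(1+\gamma_j^{(1)}\mu(\alpha_1))\prod_{j=1}^{s_2}(1+2\gamma_j^{(2)}\zeta(\alpha_2))$.

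Second, I would split the double sum in (\ref{wcerror}) into its $N$ diagonal contributions ($n=n'$, each equal to $K_{\mathrm{diag}}$) and its off-diagonal contributions. The diagonal piece alone contributes $\frac{1}{N^2}\cdot N\cdot K_{\mathrm{diag}}=K_{\mathrm{diag}}/N$, so together with the leading $-1$ it reproduces exactly the right-hand side of (\ref{eqthmlowerbound}).

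Everything then reduces to showing that the off-diagonal sum $\sum_{n\ne n'}K_{\bss,\bsalpha,\bsgamma}((\bsx_n,\bsy_n),(\bsx_{n'},\bsy_{n'}))$ is non-negative, and this is the main obstacle. My approach would be to work in the Fourier domain, using the identity $\sum_{n,n'}K=\sum_{\bsk,\bsl}r^{(1)}_{\alpha_1,\bsgamma^{(1)}}(\bsk)\,r^{(2)}_{\alpha_2,\bsgamma^{(2)}}(\bsl)\,|S_{\bsk,\bsl}|^2$ with $S_{\bsk,\bsl}:=\sum_{n=0}^{N-1}\wal_{\bsk}(\bsx_n)\ee_{\bsl}(\bsy_n)$. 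In this representation every summand is non-negative (positive coefficients against $|S|^2\ge 0$), and the $(\bszero,\bszero)$ mode by itself contributes $N^2$. The difficulty is that merely retaining the $(\bszero,\bszero)$ term yields only $\sum_{n,n'}K\ge N^2$, which gives the trivial $e^2\ge 0$; to extract the stronger bound $\sum_{n,n'}K\ge N\,K_{\mathrm{diag}}$ one must leverage the full tensor-product structure of the kernel and the positivity of all Walsh and trigonometric $r$-coefficients in a more refined way, presumably by summing the positive contributions coming from the infinitely many nonzero modes and exploiting the factorisation of both $r^{(1)}_{\alpha_1,\bsgamma^{(1)}}$ and $r^{(2)}_{\alpha_2,\bsgamma^{(2)}}$ over coordinates.
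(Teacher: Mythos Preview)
Your computation of the diagonal value $K_{\mathrm{diag}}$ is correct, and your overall plan---retain only the $n=n'$ terms in~(\ref{wcerror})---is exactly the paper's strategy. The gap is in your last paragraph: you never actually establish that the off-diagonal contribution is nonnegative. Your Fourier identity $\sum_{n,n'}K=\sum_{\bsk,\bsl}r^{(1)}_{\alpha_1,\bsgamma^{(1)}}(\bsk)\,r^{(2)}_{\alpha_2,\bsgamma^{(2)}}(\bsl)\,|S_{\bsk,\bsl}|^{2}$ is correct and shows the full double sum is nonnegative, but, as you yourself note, it does not give $\sum_{n,n'}K\ge N\,K_{\mathrm{diag}}$, and there is no way to extract that stronger inequality from mode-by-mode positivity alone: individual $|S_{\bsk,\bsl}|^{2}$ may well be smaller than $N$. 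The plan to ``sum the positive contributions coming from the infinitely many nonzero modes'' is not an argument and cannot be made into one along these lines.

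The idea you are missing, and the one the paper uses, is much more direct: the kernel $K_{\bss,\bsalpha,\bsgamma}((\bsx,\bsy),(\bsx',\bsy'))$ is \emph{pointwise} nonnegative. By~(\ref{eqkernelproduct}) it splits as a product of one-dimensional Walsh factors $1+\gamma_j^{(1)}\phi_{\wal,\alpha_1}(x_j,x_j')$, shown in~\cite{DP05} to be $\ge 0$ when $\gamma_j^{(1)}\le 1$, and one-dimensional Korobov factors $1+2\gamma_j^{(2)}\sum_{l\ge 1}\cos(2\pi l(y_j-y_j'))/l^{\alpha_2}$, handled in~\cite{SW01}. Once every individual summand $K_{\bss,\bsalpha,\bsgamma}((\bsx_n,\bsy_n),(\bsx_{n'},\bsy_{n'}))$ is $\ge 0$, the off-diagonal sum is trivially nonnegative and~(\ref{eqthmlowerbound}) follows immediately.
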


\begin{proof}
Let us, for the sake of simplicity, 
assume that $1\ge \gamma_j^{(1)}$, respectively, $1\ge \gamma_j^{(2)}$, for $j\ge 1$. From~\eqref{eqkernelproduct}, we
have
\[K_{\bss,\bsalpha,\bsgamma}((\bsx,\bsy),(\bsx',\bsy'))=K_{s_1,\alpha_1,\bsgamma^{(1)}}(\bsx,\bsx') K_{s_2,\alpha_2,\bsgamma^{(2)}}(\bsy,\bsy').\]
Hence, using results in \cite{DP05} and \cite{SW01}, we can write 
\begin{eqnarray*}
\lefteqn{K_{\bss,\bsalpha,\bsgamma}((\bsx,\bsy),(\bsx',\bsy'))
=K_{s_1,\alpha_1,\bsgamma^{(1)}}(\bsx,\bsx') K_{s_2,\alpha_2,\bsgamma^{(2)}}(\bsy,\bsy')}\\
&=& \left(\prod_{j=1}^{s_1} (1+ \gamma_j \phi_{\wal,\alpha_1}(x_j,x_j'))\right)\left(1+2\gamma_j \sum_{l=1}^\infty 
    \frac{\cos (2\pi l (y_j-y_j'))}{l^{\alpha_2}}\right), 
\end{eqnarray*}
where the technical function $\phi_{\wal,\alpha_1}$ is defined as in \cite[p.~170]{DP05}, where it is also noted that 
$1+ \gamma_j \phi_{\wal,\alpha_1}(u,v) \ge 0$ for any $u,v$. Hence we can conclude that $K_{\bss,\bsalpha,\bsgamma}((\bsx,\bsy),(\bsx',\bsy'))$ is nonnegative. Now we use \eqref{wcerror} and obtain 
\begin{eqnarray*}
e^2(\cH(K_{\bss,\bsalpha,\bsgamma}),\cS_{N,\bss})&=&-1+\frac{1}{N^2} \sum_{n,n'=0}^{N-1} 
K_{\bss,\bsalpha,\bsgamma}((\bsx_n,\bsy_n),(\bsx_{n'},\bsy_{n'}))\\
&\ge& -1 +\frac{1}{N^2} \sum_{n=0}^{N-1} 
K_{\bss,\bsalpha,\bsgamma}((\bsx_n,\bsy_n),(\bsx_{n},\bsy_{n}))\\
&=& -1 +\frac{1}{N^2} \sum_{n=0}^{N-1} \left( \sum_{\bsk\in\Natural_0^{s_1}}\sum_{\bsl\in\Integer^{s_2}} r_{\alpha_1,\bsgamma^{(1)}}^{(1)} (\bsk) 
r_{\alpha_2,\bsgamma^{(2)}}^{(2)} (\bsl)\right)\\
&=&-1+\frac{1}{N} \left(\sum_{\bsk\in\Natural_0^{s_1}}r_{\alpha_1,\bsgamma^{(1)}}^{(1)} (\bsk)\right)
\left(\sum_{\bsl\in\Integer^{s_2}}r_{\alpha_2,\bsgamma^{(2)}}^{(2)} (\bsl)\right)\\
&=& -1 +\frac{1}{N}\left(\prod_{j=1}^{s_1}(1+\gamma_j^{(1)} \mu (\alpha_1))\right)\left(\prod_{j=1}^{s_2}(1+2\gamma_j^{(2)}\zeta(\alpha_2))\right).
\end{eqnarray*}
\end{proof}

From Theorem \ref{thmlowerbound}, we immediately obtain that
\[N_{\min}(\varepsilon,s_1+s_2)\ge \frac{1}{1+\varepsilon^2} 
  \left(\prod_{j=1}^{s_1}(1+\gamma_j^{(1)} \mu (\alpha_1))\right)\left(\prod_{j=1}^{s_2}(1+2\gamma_j^{(2)}\zeta(\alpha_2))\right).\]
Now the two products can be analyzed in the same way as it was done in \cite{DP05} and \cite{SW01}, respectively. 
This finally leads to the necessary conditions \eqref{eqcondstrongtract} and \eqref{eqcondtract} in Theorem~\ref{thm_main}. 
Now assume that we have weak QMC-tractability. Then for $\varepsilon=1$ we have 
$$\log N_{\min}(1,s_1+s_2) \ge \log \frac{1}{2}+ \sum_{j=1}^{s_1} \log(1+\gamma_j^{(1)} \mu(\alpha_1))+\sum_{j=1}^{s_2}\log (1+2\gamma_j^{(2)}\zeta(\alpha_2))$$ 
and 
$$\lim_{(s_1+s_2) \rightarrow \infty} \frac{\sum_{j=1}^{s_1} \log(1+\gamma_j^{(1)} \mu(\alpha_1))+\sum_{j=1}^{s_2}
\log (1+2\gamma_j^{(2)}\zeta(\alpha_2))}{s_1+s_2}=0.$$ 
This implies that $\lim_{j \rightarrow \infty} \gamma_j^{(k)}=0$ for $k \in \{1,2\}$. 
For small enough $x >0$ we have $\log(1+x) \ge cx$ for some $c>0$. Hence, 
for some $j_1,j_2 \in \Natural$ and $s_1 \ge j_1$ and $s_2 \ge j_2$ we have 
\begin{eqnarray*}
\lefteqn{\sum_{j=1}^{s_1} \log(1+\gamma_j^{(1)} \mu(\alpha_1))+\sum_{j=1}^{s_2}\log (1+2\gamma_j^{(2)}\zeta(\alpha_2))}\\
&& \ge c_1 \mu(\alpha_1) \sum_{j = j_1}^{s_1} \gamma_j^{(1)} + c_2 2 \zeta(\alpha_2) \sum_{j = j_2}^{s_2} \gamma_j^{(2)}
\end{eqnarray*}
and therefore, under the assumption of weak QMC-tractability, 
$$\lim_{(s_1+s_2)\rightarrow \infty}\frac{c_1 \mu(\alpha_1) \sum_{j = j_1}^{s_1} \gamma_j^{(1)} + c_2 2 \zeta(\alpha_2) 
\sum_{j = j_2}^{s_2} \gamma_j^{(2)}}{s_1+s_2}=0.$$ 
This implies the necessity of \eqref{eqcondweaktract}.

\section{Proof of the sufficient conditions}\label{pr_suf}

We give a constructive proof of the sufficient conditions by finding, component-by-component, 
a QMC algorithm whose worst-case error implies the sufficient conditions in 
Theorem~\ref{thm_main}. This QMC algorithm is based on lattice rules and on polynomial lattice rules, 
where the lattice rules are used to integrate the ``Korobov part'' of the integrand and the polynomial lattce rules are used 
to integrate the ``Walsh part''. We quickly recall the concepts of (polynomial) lattice rules:
\begin{itemize}
\item {\bf Lattice point sets (according to Hlawka \cite{H62} and Korobov \cite{K59}).}
Let $N \in \Natural$ be an integer and let $\bsz=(z_1,\ldots,z_{s_2}) \in
\ZZ^{s_2}$. The {\it lattice point set} $(\bsy_n)_{n=0}^{N-1}$
with {\it generating vector} $\bsz$, consisting of $N$ points in
$[0,1)^{s_2}$, is defined by
\[\bsy_n=\left(\left\{\frac{nz_1}{N}\right\},\ldots,\left\{\frac{nz_{s_2}}{N}\right\}\right)
 \; \mbox{ for all }\; 0\le n\le N-1,\] where $\{\cdot\}$ denotes the fractional part
of a number. A QMC rule that is based on a lattice point set is called a {\it lattice rule}.

\item {\bf Polynomial lattice point sets (according to Niederreiter \cite{nie92}).} Let $\FF_b$ be the finite field of prime order $b$.
Furthermore let $\FF_b[x]$ be the set of polynomials over
$\FF_b$, and let $\FF_b ((x^{-1}))$ be the field of formal Laurent
series over $\FF_b$. The latter contains the field of rational functions
as a subfield. Given $m\in \Natural$, set $G_{b,m}:=\{a \in \FF_b[x]\, : \, \deg(a)<m\}$ and define a mapping
$\nu_m:\FF_b ((x^{-1}))\To [0,1)$ by
\[\nu_m\left(\sum_{l=z}^{\infty}t_l x^{-l}\right):=\sum_{l=\max (1,z)}^{m}t_l b^{-l}.\]
Let $f\in\FF_b [x]$ with $\deg(f)=m$ and $\bsg=(g_1,\ldots,g_{s_1})\in\FF_b [x]^{s_1}$. The {\it polynomial lattice point set} $(\bsx_h)_{h \in G_{b,m}}$ with generating vector $\bsg$, consisting of $b^m$ points in $[0,1)^{s_1}$, is defined by   
\[\bsx_h:=\left(\nu_m\left(\frac{h(x)g_1 (x)}{f(x)}\right),\ldots,\nu_m\left(\frac{h(x)g_{s_1} (x)}{f(x)}\right)\right) \; \mbox{ for all }\; h \in G_{b,m}.\]
A QMC rule using a polynomial lattice point set is called {\it polynomial lattice rule}. 
\end{itemize}

\subsection{Component-by-component construction}

We now show a component-by-component (CBC) construction algorithm for point sets that are suitable for integration in 
the space $\cH(K_{\bss,\bsalpha,\bsgamma})$. For practical reasons, we will, in the following, denote the worst-case 
error of a hybrid point set $\cS_{N,\bss}=((\bsx_n,\bsy_n))_{n=0}^{N-1}$, consisting of an $s_1$-dimensional polynomial lattice 
generated by $\bsg$ and an $s_2$-dimensional 
lattice generated by $\bsz$, by 
$$e^2_{\bss,\bsalpha,\bsgamma}(\bsg,\bsz),$$
where $\bsg$ is the generating vector of the polynomial lattice part, and $\bsz$ is the generating vector of the lattice part. We have
\begin{eqnarray}\label{eqerrexpression}
e^2_{\bss,\bsalpha,\bsgamma}(\bsg,\bsz) & = & -1 + 
\frac{1}{N^2} \sum_{n,n'=0}^{N-1} \left[\prod_{j=1}^{s_1}\left(1+\gamma_j^{(1)} \sum_{k_j\in\Natural} 
\frac{\wal_{k_j}(x_n^{(j)}\ominus x_{n'}^{(j)})}{b^{\alpha_1\psi_b (k_j)}}\right)\right]\nonumber \\
&& \hspace{1cm} \times 
\left[\prod_{j=1}^{s_2}\left(1+\gamma_j^{(2)} \sum_{l_j\in\Integer\setminus\{0\}} 
\frac{\ee_{l_j}(y_n^{(j)}-y_{n'}^{(j)})}{\abs{l_j}^{\alpha_2}}\right)\right].
\end{eqnarray}

We now proceed to our construction algorithm. Note that we state the algorithm in a way such that
we exclude the cases $s_1=0$ or $s_2=0$, as these are covered by the results in \cite{DKPS05} and \cite{KJ02}.
\begin{algorithm}\label{algcbchybrid}
Let $s_1,s_2,m\in \Natural$, a prime number $b$, and an irreducible polynomial $f\in \FF_b[x]$ with $\deg (f)=m$ be given. 
We write $N=b^m$. 
\begin{enumerate}
\item For $d_1=1$, choose $g_1=1\in G_{b,m}$.
\item For $d_2=1$, choose $z_1\in Z_N$ such that
$$e^2_{(1,1),\bsalpha,\bsgamma}(g_1,z_1)$$
is minimized as a function of $z_1$.
\item For $d_1\in\{1,\ldots,s_1\}$ and $d_2\in\{1,\ldots,s_2\}$, assume that 
$\bsg_{d_1}^*=(g_1,\ldots,g_{d_1})$ and $\bsz_{d_2}^*=(z_1,\ldots,z_{d_2})$ are given. 
If $d_1< s_1$ and $d_2<s_2$ go to either Step (\ref{stepa}) or (\ref{stepb}). If $d_1=s_1$ and $d_2<s_2$ 
go to Step (\ref{stepb}). If $d_1<s_1$ and $d_2=s_2$, go to Step (\ref{stepa}). If $d_1=s_1$ and $d_2=s_2$, the 
algorithm terminates. 
\begin{enumerate}
\item \label{stepa} Choose $g_{d_1+1}\in G_{b,m}$ such that
 $$e^2_{(d_1+1,d_2),\bsalpha,\bsgamma}((\bsg_{d_1}^*,g_{d_1+1}),\bsz_{d_2}^*)$$
is minimized as a function of $g_{d_1+1}$. Increase $d_1$ by 1 and repeat Step 3.
\item \label{stepb} Choose $g_{d_2+1}\in Z_N$ such that 
 $$e^2_{(d_1,d_2+1),\bsalpha,\bsgamma}(\bsg_{d_1}^*,(\bsz_{d_2}^*,z_{d_2+1}))$$
is minimized as a function of $z_{d_2+1}$. Increase $d_2$ by 1 and repeat Step 3.
\end{enumerate}
\end{enumerate}
\end{algorithm}
\begin{remark}
 As pointed out in, e.g., \cite{SW01} and \cite{DP05}, the infinite sums in~\eqref{eqerrexpression} can be represented in closed form, 
 so the construction cost of Algorithm~\ref{algcbchybrid} is of order $\mathcal{O}(n^3 (s_1 +s_2)^2)$.
\end{remark}

The following theorem states that our algorithm yields hybrid integration nodes with 
a low worst case error.

\begin{theorem}\label{thmcbc} Let $d_1\in\{1,\ldots,s_1\}$ and $d_2\in\{1,\ldots,s_2\}$ be given. Then 
the generating vectors $\bsg_{d_1}^*$ and $\bsz_{d_2}^*$ constructed by Algorithm \ref{algcbchybrid} satisfy
\begin{equation}\label{eqthmcbc}
e^2_{(d_1,d_2),\bsalpha,\bsgamma}(\bsg_{d_1}^*,\bsz_{d_2}^*)\le \frac{2}{N} 
\left(\prod_{j=1}^{d_1}\left(1+\gamma_j^{(1)}2\mu (\alpha_1)\right)\right) 
\left(\prod_{j=1}^{d_2}\left(1+\gamma_j^{(2)}4\zeta (\alpha_2)\right)\right).
\end{equation}
\end{theorem}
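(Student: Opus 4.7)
The proof proceeds by induction on the combined dimension $d:=d_1+d_2$. At each inductive step, Algorithm~\ref{algcbchybrid} extends either the polynomial-lattice part (Step~\ref{stepa}) or the classical-lattice part (Step~\ref{stepb}); the inductive bound must therefore be propagated separately in each case, and the base case $(d_1,d_2)=(1,1)$ is handled by the same averaging argument applied to a one-dimensional situation.

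The starting point is the recursive decomposition of the squared error obtained by isolating the contribution of the newly added component in \eqref{eqerrexpression}. Setting $\phi_{\wal,\alpha_1}(u):=\sum_{k\ge 1}\wal_k(u)/b^{\alpha_1\psi_b(k)}$ and $\phi_{\trig,\alpha_2}(v):=2\sum_{l\ge 1}\cos(2\pi l v)/l^{\alpha_2}$, one obtains
\[
e^2_{(d_1+1,d_2)}\bigl((\bsg,g),\bsz\bigr) = e^2_{(d_1,d_2)}(\bsg,\bsz) + \frac{\gamma_{d_1+1}^{(1)}}{N^2}\sum_{n,n'=0}^{N-1} P_{d_1,d_2}(n,n')\,\phi_{\wal,\alpha_1}\bigl(x_n^{g}\ominus x_{n'}^{g}\bigr),
\]
where $P_{d_1,d_2}(n,n')\ge 0$ is the product of the first $d_1$ Walsh and first $d_2$ Korobov kernel factors at the already-chosen components; an analogous identity holds for the extension in the Korobov direction.

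Since Algorithm~\ref{algcbchybrid} selects $g_{d_1+1}$ (respectively $z_{d_2+1}$) to minimise the squared error, the resulting value is at most the average over the search set, and the crux of the proof is the evaluation of these averages. For the polynomial-lattice direction I would use that, because $f$ is irreducible and the $b$-adic polynomials associated with $n,n'\in\{0,\ldots,N-1\}$ have degree $<m$, for each fixed $n\ne n'$ the map $g\mapsto (n(x)-n'(x))g(x)\bmod f(x)$ is a bijection of $G_{b,m}$ onto $\FF_b[x]/(f)$. Combined with the Walsh expansion of $\phi_{\wal,\alpha_1}$ and character orthogonality, this identifies the off-diagonal average with $\sum_{k\ge 1,\,f\mid\mathrm{poly}(k)}b^{-\alpha_1\psi_b(k)}=\mu(\alpha_1)/N^{\alpha_1}$, while the diagonal contribution $n=n'$ is exactly $\mu(\alpha_1)$ times the (constant) value $P_{d_1,d_2}(0,0)$. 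A parallel argument, using the standard character-sum identity $\frac{1}{N}\sum_{n=0}^{N-1}\ee(nk/N)$, which vanishes unless $N\mid k$, handles the Korobov direction and produces the analogous bound involving $2\zeta(\alpha_2)/N$.

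Combining these two averaging estimates with the inductive hypothesis and some elementary algebra propagates the bound \eqref{eqthmcbc} from $(d_1,d_2)$ to $(d_1+1,d_2)$ (and analogously to $(d_1,d_2+1)$), the factors $(1+2\gamma_{d_1+1}^{(1)}\mu(\alpha_1))$ and $(1+4\gamma_{d_2+1}^{(2)}\zeta(\alpha_2))$ appearing exactly in the places where the diagonal kernel value is expanded by one new Walsh or Korobov factor. The main technical obstacle is the polynomial-lattice averaging step: the irreducibility of $f$ is essential for ensuring that the dual polynomial lattice contains no nonzero element with a $b$-adic polynomial of degree less than $m$, which is precisely what produces the sharp $N^{-\alpha_1}$ decay of the off-diagonal average and keeps the diagonal contribution (proportional to $P_{d_1,d_2}(0,0)/N$) as the dominant term that the prefactor $2/N$ in \eqref{eqthmcbc} is designed to absorb.
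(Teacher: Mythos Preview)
Your approach is essentially the paper's: induction on $d_1+d_2$, the recursive splitting $e^2_{(d_1+1,d_2)}=e^2_{(d_1,d_2)}+\theta(g_{d_1+1})$, the averaging bound $\theta(g^*)\le N^{-1}\sum_{g\in G_{b,m}}\theta(g)$, and the evaluation of this average via Walsh/trigonometric character orthogonality together with the irreducibility of $f$.

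Two small points need correcting. First, the assertion $P_{d_1,d_2}(n,n')\ge 0$ is false in general: a Korobov factor $1+\gamma_j^{(2)}\phi_{\trig,\alpha_2}(v)$ can be negative when $\gamma_j^{(2)}$ is not small. What is true, and what the paper actually uses, is the uniform bound
\[
|P_{d_1,d_2}(n,n')|\le P_{d_1,d_2}(0,0)=\prod_{j=1}^{d_1}\bigl(1+\gamma_j^{(1)}\mu(\alpha_1)\bigr)\prod_{j=1}^{d_2}\bigl(1+2\gamma_j^{(2)}\zeta(\alpha_2)\bigr),
\]
which already suffices to make the off-diagonal contribution subordinate to the diagonal one exactly as you describe. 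Second, in the Korobov step the algorithm searches over $Z_N$ (the units modulo $N$), not over all residues, so the relevant average is $\frac{1}{\phi(N)}\sum_{z\in Z_N}\ee(nzl/N)$ rather than the plain character sum you invoke; the paper handles this via the lemmas of Kuo and Joe, and this is where the constant $4\zeta(\alpha_2)$ (rather than your $2\zeta(\alpha_2)$) arises.
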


The proof of Theorem~\ref{thmcbc} is deferred to the appendix.

\subsection{Proof of the sufficient conditions}

Let us first deal with strong tractability. From Theorem \ref{thmcbc} it follows that for $N=b^m$ the squared $N$th minimal error satisfies
\begin{eqnarray*}
e^2(N,s_1+s_2) \le  \frac{2}{N} 
\left(\prod_{j=1}^{s_1}\left(1+\gamma_j^{(1)}2\mu (\alpha_1)\right)\right) 
\left(\prod_{j=1}^{s_2}\left(1+\gamma_j^{(2)}4\zeta (\alpha_2)\right)\right).
\end{eqnarray*}
Assuming that \eqref{eqcondstrongtract} holds, we know that $\sum_{j=1}^{\infty}\gamma_j^{(1)} \le \infty$,
and hence we can estimate
\begin{align*}
\prod_{j=1}^{s_1}(1+\gamma_j^{(1)} \mu (\alpha_1))&=\exp\left(\sum_{j=1}^{s_1} \log (1+\gamma_j^{(1)} \mu (\alpha_1))\right)\\
& \le  \exp\left(\sum_{j=1}^{\infty} \gamma_j^{(1)} \mu (\alpha_1)\right)=:C_1(\alpha_1,\bsgamma^{(1)}).
\end{align*}
A similar argument shows that
\[\prod_{j=1}^{s_2}(1+4\gamma_j^{(2)} \zeta (\alpha_2)) \le C_2(\alpha_2,\bsgamma^{(2)}).\]
Hence 
$$e^2(N,s_1+s_2) \le \frac{2}{N} C_1(\alpha_1,\bsgamma^{(1)}) C_2(\alpha_2,\bsgamma^{(2)})=: \frac{C(\bsalpha,\bsgamma)}{N}.$$
For $\varepsilon>0$ choose $m \in \Natural$ such that $b^{m-1} < \lceil C(\bsalpha,\bsgamma)\varepsilon^{-2} \rceil =:N' \le b^m$. 
Then we have $e(b^m,s_1+s_2) \le \varepsilon$ and hence 
$$N_{\min}(\varepsilon,s_1+s_2) \le b^m < b N' =b \lceil C(\bsalpha,\bsgamma) \varepsilon^{-2} \rceil .$$ 
This implies strong polynomial QMC-tractability. The corresponding bounds can be achieved with the point set constructed by Algorithm~\ref{algcbchybrid}.

The sufficiency of the condition for polynomial QMC-tractability is shown in a similar fashion by standard arguments (cf. \cite{DP05,SW01}). 

We show the sufficiency of the condition for weak QMC-tractability. From Theorem~\ref{thmcbc} it follows that 
$$N_{\min}(\varepsilon,s_1+s_2) \le 2 \varepsilon^{-2} \left\lceil\left(\prod_{j=1}^{s_1}\left(1+\gamma_j^{(1)}2\mu (\alpha_1)\right)\right) 
\left(\prod_{j=1}^{s_2}\left(1+\gamma_j^{(2)}4\zeta (\alpha_2)\right)\right)\right\rceil.$$ 
Hence
\begin{eqnarray*}
\log N_{\min}(\varepsilon,s_1+s_2) & \le & \log 4 +2 \log \varepsilon^{-1}\\
&& +\sum_{j=1}^{s_1} \log \left(1+\gamma_j^{(1)}2\mu (\alpha_1)\right)+ \sum_{j=1}^{s_2}\log\left(1+\gamma_j^{(2)}4\zeta (\alpha_2)\right)\\
&\le& \log 4 +2 \log \varepsilon^{-1} +2\mu (\alpha_1) \sum_{j=1}^{s_1} \gamma_j^{(1)}+ 4\zeta (\alpha_2) \sum_{j=1}^{s_2} \gamma_j^{(2)},
\end{eqnarray*}
and this together with \eqref{eqcondweaktract} implies the result.
 
\section*{Appendix: The proof of Theorem~\ref{thmcbc}}\label{proof_thm3}

We now give the proof of Theorem~\ref{thmcbc}.

\begin{proof}
We show the result by an inductive argument. 

Note that we excluded the cases where $s_1=0$ or where $s_2=0$, so we start our considerations by dealing with the case 
where $d_1=d_2=1$. According to Algorithm \ref{algcbchybrid}, we have chosen $g_1=1\in G_{b,m}$ and $z_1\in Z_N$ such that
$$e^2_{(1,1),\bsalpha,\bsgamma}(g_1,z_1)$$ is minimized as a function of $z_1$. In the following, we denote the 
points generated by $(g,z)\in G_{b,m}\times Z_N$ by $(x_n (g), y_n (z))$.

According to Equation \eqref{eqerrexpression}, we have
$$e^2_{(1,1),\bsalpha,\bsgamma}(g_1,z_1)= e^2_{1,\alpha_1,\gamma^{(1)}}(1)+\theta_{(1,1)}(z_1),$$
where $e^2_{1,\alpha_1,\gamma^{(1)}}(1)$ denotes the squared worst-case error of the polynomial lattice rule generated by 1 in the Walsh space 
$\cH(K_{1,\alpha_1,\gamma^{(1)}})$, and where
\begin{multline*}
\theta_{(1,1)}(z_1):=\frac{\gamma_{1}^{(2)}}{N^2} \sum_{n,n'=0}^{N-1} \left(1+\gamma_1^{(1)}\sum_{k_1\in\Natural}\frac{\wal_{k_1}(x_n^{(1)}(g_1)
\ominus x_{n'}^{(1)}(g_1))}{b^{\alpha_1\psi_b (k_1)}}\right)\\ \times\sum_{l_1\in\Integer\setminus\{0\}}
\frac{\ee_{l_1}(y_n^{(1)}(z_1)-y_{n'}^{(1)}(z_1))}{\abs{l_1}^{\alpha_2}}.
\end{multline*}
By results in \cite{DKPS05}, we know that 
\begin{equation}\label{eqonedimDKPS}
e^2_{1,\alpha_1,\gamma^{(1)}}(1) \le \frac{2}{N}\left(1+\gamma_1^{(1)}\mu(\alpha_1)\right). 
\end{equation}

Then, as $z_1$ was chosen to minimize the error,
\begin{eqnarray*}
\theta_{(1,1)}(z_1)&\le&\frac{1}{\phi (N)} \sum_{z\in Z_N} \theta_{(1,1)}(z)\\
&=&\frac{\gamma_{1}^{(2)}}{N^2} \sum_{n,n'=0}^{N-1} \left(1+\gamma_1^{(1)}\sum_{k_1\in\Natural}\frac{\wal_{k_1}(x_n^{(1)}(g_1)
\ominus x_{n'}^{(1)}(g_1))}{b^{\alpha_1\psi_b (k_1)}}\right)\\ &&\times\frac{1}{\phi (N)} \sum_{z\in Z_N} \sum_{l_1\in\Integer\setminus\{0\}}
\frac{\ee_{l_1}(y_n^{(1)}(z)-y_{n'}^{(1)}(z))}{\abs{l_1}^{\alpha_2}}\\
&\le&\frac{\gamma_{1}^{(2)}}{N^2} \sum_{n,n'=0}^{N-1} \left(1+\gamma_1^{(1)}\mu (\alpha_1)\right)
\abs{\frac{1}{\phi (N)} \sum_{z\in Z_N} \sum_{l_1\in\Integer\setminus\{0\}}
\frac{\ee_{l_1}(y_n^{(1)}(z)-y_{n'}^{(1)}(z))}{\abs{l_1}^{\alpha_2}}}\\
&=&\gamma_{1}^{(2)} \left(1+\gamma_1^{(1)}\mu (\alpha_1)\right)\frac{1}{N^2}\sum_{n,n'=0}^{N-1}
\abs{\frac{1}{\phi (N)} \sum_{z\in Z_N} \sum_{l_1\in\Integer\setminus\{0\}}
\frac{\ee_{l_1}(y_n^{(1)}(z)-y_{n'}^{(1)}(z))}{\abs{l_1}^{\alpha_2}}}.
\end{eqnarray*}
Let us now deal with the term 
\begin{eqnarray*}
\Sigma_B&:=&\frac{1}{N^2}\sum_{n,n'=0}^{N-1}
\abs{\frac{1}{\phi (N)} \sum_{z\in Z_N} \sum_{l_1\in\Integer\setminus\{0\}}
\frac{\ee_{l_1}(y_n^{(1)}(z)-y_{n'}^{(1)}(z))}{\abs{l_1}^{\alpha_2}}}\\
&=&\frac{1}{N^2}\sum_{n=0}^{N-1}\sum_{n'=0}^{N-1}
\abs{\frac{1}{\phi (N)} \sum_{z\in Z_N} \sum_{l_1\in\Integer\setminus\{0\}}
\frac{\ee^{2\pi\icomp (n-n')zl_1/N}}{\abs{l_1}^{\alpha_2}}}\\
&=&\frac{1}{N}\sum_{n=0}^{N-1}
\abs{\frac{1}{\phi (N)} \sum_{z\in Z_N} \sum_{l_1\in\Integer\setminus\{0\}}
\frac{\ee^{2\pi\icomp n z l_1/N}}{\abs{l_1}^{\alpha_2}}}\\
&=&\frac{1}{N}\sum_{n=1}^{N}
\abs{\frac{1}{\phi (N)} \sum_{z\in Z_N} \sum_{l_1\in\Integer\setminus\{0\}}
\frac{\ee^{2\pi\icomp n z l_1/N}}{\abs{l_1}^{\alpha_2}}},
\end{eqnarray*}
since the inner sum in the second line always has the same value. We now use \cite[Lemmas 2.1 and 2.3]{KJ02} and obtain 
$$\Sigma_B\le\frac{1}{N}4\zeta (\alpha_2),$$
where we used that $N$ has only one prime factor. Hence we obtain 
\begin{equation}\label{eqonedimKJ}
\theta_{(1,1)}(z_1)\le \frac{\gamma_{1}^{(2)}}{N}\left(1+\gamma_1^{(1)}\mu (\alpha_1)\right)4\zeta (\alpha_2).
\end{equation}
Combining Equations \eqref{eqonedimDKPS} and \eqref{eqonedimKJ} yields the desired bound for $(g_1,z_1)$.

\medskip

Let us now assume $d_1\in\{1,\ldots,s_1\}$ and $d_2\in\{1,\ldots,s_2\}$ and that we have already found generating 
vectors $\bsg_{d_1}^*$ and $\bsz_{d_2}^*$ such that Equation \eqref{eqthmcbc} is satisfied. 

In what follows, we are going to distinguish two cases: In the first case, we assume that $d_1<s_1$ and
add a component $g_{d_1+1}$ to $\bsg_{d_1}^*$, and 
in the second case, we assume that $d_2<s_2$ and add a component $z_{d_2+1}$ to $\bsz_{d_2}^*$. In both cases, we will show that the corresponding 
bounds on the squared worst-case errors hold. 

\medskip

Let us first consider the case where we start from $(\bsg_{d_1}^*,\bsz_{d_2}^*)$ and add, by Algorithm \ref{algcbchybrid}, a component $g_{d_1+1}$ to $\bsg_{d_1}^*$.
According to Equation \eqref{eqerrexpression}, we have
\begin{eqnarray*}
e^2_{(d_1+1,d_2),\bsalpha,\bsgamma}((\bsg_{d_1}^*,g_{d_1+1}),\bsz_{d_2}^*)=e^2_{(d_1,d_2),\bsalpha,\bsgamma}(\bsg_{d_1}^*,\bsz_{d_2}^*)+ \theta_{(d_1+1,d_2)}(g_{d_1+1}),
\end{eqnarray*}
where 
\begin{eqnarray*}
\lefteqn{\theta_{(d_1+1,d_2)}(g_{d_1+1})}\\&:=&\frac{\gamma_{d_1 +1}^{(1)}}{N^2} \sum_{n,n'=0}^{N-1} \left[\prod_{j=1}^{d_1}\left(1+\gamma_j^{(1)}\sum_{k_j\in\Natural}\frac{\wal_{k_j}(x_n^{(j)}(g_j)
\ominus x_{n'}^{(j)}(g_j))}{b^{\alpha_1\psi_b (k_j)}}\right)\right]\\
&&\times\left[\prod_{j=1}^{d_2}\left(1+\gamma_j^{(2)}\sum_{l_j\in\Integer\setminus\{0\}}\frac{\ee_{l_j}(y_n^{(j)}(z_j)
- y_{n'}^{(j)}(z_j))}{\abs{l_j}^{\alpha_2}}\right)\right]\\
&&\times\sum_{k_{d_1+1}\in\Natural}\frac{\wal_{k_{d_1+1}}(x_n^{(d_1+1)}(g_{d_1+1})
\ominus x_{n'}^{(d_1+1)}(g_{d_1+1}))}{b^{\alpha_1\psi_b (k_{d_1+1})}}.
\end{eqnarray*}
However, by the assumption, we know that 
\begin{eqnarray}\label{eqindassumption}
\lefteqn{e^2_{(d_1,d_2),\bsalpha,\bsgamma}(\bsg_{d_1}^*,\bsz_{d_2}^*)}\nonumber \\ &\le& \frac{2}{N} 
\left[\prod_{j=1}^{d_1}\left(1+\gamma_j^{(1)}2\mu (\alpha_1)\right)\right] 
\left[\prod_{j=1}^{d_2}\left(1+\gamma_j^{(2)}4\zeta (\alpha_2)\right)\right].
\end{eqnarray}

Furthermore, as $g_{d_1+1}$ was chosen to minimize the error,
\begin{eqnarray*}
\lefteqn{\theta_{(d_1+1,d_2)}(g_{d_1+1})\le\frac{1}{N} \sum_{g\in G_{b,m}} \theta_{(d_1+1,d_2)}(g)}\\
&=&\frac{\gamma_{d_1 +1}^{(1)}}{N^2} \sum_{n,n'=0}^{N-1} \left[\prod_{j=1}^{d_1}\left(1+\gamma_j^{(1)}\sum_{k_j\in\Natural}\frac{\wal_{k_j}(x_n^{(j)}(g_j)
\ominus x_{n'}^{(j)}(g_j))}{b^{\alpha_1\psi_b (k_j)}}\right)\right]\\
&&\times\left[\prod_{j=1}^{d_2}\left(1+\gamma_j^{(2)}\sum_{l_j\in\Integer\setminus\{0\}}\frac{\ee_{l_j}(y_n^{(j)}(z_j)
- y_{n'}^{(j)}(z_j))}{\abs{l_j}^{\alpha_2}}\right)\right]\\
&&\times\frac{1}{N} \sum_{g\in G_{b,m}}\sum_{k_{d_1+1}\in\Natural}\frac{\wal_{k_{d_1+1}}(x_n^{(d_1+1)}(g)
\ominus x_{n'}^{(d_1+1)}(g))}{b^{\alpha_1\psi_b (k_{d_1+1})}}\\
&\le& \frac{\gamma_{d_1 +1}^{(1)}}{N^2} \sum_{n,n'=0}^{N-1} \left[\prod_{j=1}^{d_1}\left(1+\gamma_j^{(1)}\mu(\alpha_1)\right)\right]
\left[\prod_{j=1}^{d_2}\left(1+\gamma_j^{(2)}2\zeta(\alpha_2)\right)\right]\\
&&\times\abs{\frac{1}{N} \sum_{g\in G_{b,m}}\sum_{k_{d_1+1}\in\Natural}\frac{\wal_{k_{d_1+1}}(x_n^{(d_1+1)}(g)
\ominus x_{n'}^{(d_1+1)}(g))}{b^{\alpha_1\psi_b (k_{d_1+1})}}}\\
&=&\gamma_{d_1 +1}^{(1)}\left[\prod_{j=1}^{d_1}\left(1+\gamma_j^{(1)}\mu(\alpha_1)\right)\right]
\left[\prod_{j=1}^{d_2}\left(1+\gamma_j^{(2)}2\zeta(\alpha_2)\right)\right]\\
&&\times\frac{1}{N^2}\sum_{n,n'=0}^{N-1}\abs{\frac{1}{N} \sum_{g\in G_{b,m}}\sum_{k_{d_1+1}\in\Natural}\frac{\wal_{k_{d_1+1}}(x_n^{(d_1+1)}(g)
\ominus x_{n'}^{(d_1+1)}(g))}{b^{\alpha_1\psi_b (k_{d_1+1})}}}.
\end{eqnarray*}
Let us now deal with the term 
\begin{eqnarray*}
\Sigma_C&:=&\frac{1}{N^2}\sum_{n,n'=0}^{N-1}\abs{\frac{1}{N} \sum_{g\in G_{b,m}}\sum_{k_{d_1+1}\in\Natural}\frac{\wal_{k_{d_1+1}}(x_n^{(d_1+1)}(g)
\ominus x_{n'}^{(d_1+1)}(g))}{b^{\alpha_1\psi_b (k_{d_1+1})}}}\\
&=&\frac{1}{N^2}\sum_{n=0}^{N-1}\sum_{n'=0}^{N-1}\abs{\frac{1}{N} \sum_{g\in G_{b,m}}\sum_{k_{d_1+1}\in\Natural}\frac{\wal_{k_{d_1+1}}(x_{n\ominus n'}^{(d_1+1)}(g))}
{b^{\alpha_1\psi_b (k_{d_1+1})}}}\\
&=&\frac{1}{N}\sum_{n=0}^{N-1}\abs{\frac{1}{N} \sum_{g\in G_{b,m}}\sum_{k_{d_1+1}\in\Natural}
\frac{\wal_{k_{d_1+1}}(x_n^{(d_1+1)}(g))}{b^{\alpha_1\psi_b (k_{d_1+1})}}},
\end{eqnarray*}
where we again used that the inner sum in the second line always has the same value. We now write
\begin{eqnarray*}
 \Sigma_C&=&\frac{1}{N}\sum_{k_{d_1+1}\in\Natural}\frac{1}{b^{\alpha_1\psi_b (k_{d_1+1})}}\\
&& +\frac{1}{N}\sum_{n=1}^{N-1}\abs{\frac{1}{N} \sum_{g\in G_{b,m}}\sum_{k_{d_1+1}\in\Natural}
\frac{\wal_{k_{d_1+1}}(x_n^{(d_1+1)}(g))}{b^{\alpha_1\psi_b (k_{d_1+1})}}}\\
&=&\frac{\mu(\alpha_1)}{N}
+\frac{1}{N}\sum_{n=1}^{N-1}\abs{\frac{1}{N} \sum_{g\in G_{b,m}}\sum_{k_{d_1+1}\in\Natural}
\frac{\wal_{k_{d_1+1}}(x_n^{(d_1+1)}(g))}{b^{\alpha_1\psi_b (k_{d_1+1})}}}.
\end{eqnarray*}
Let now $n\in\{1,\ldots,N-1\}$ be fixed, and consider the term
$$\Sigma_{C,n}:=\frac{1}{N} \sum_{g\in G_{b,m}}\sum_{k_{d_1+1}\in\Natural}
\frac{\wal_{k_{d_1+1}}(x_n^{(d_1+1)}(g))}{b^{\alpha_1\psi_b (k_{d_1+1})}}.$$

We obtain
\begin{eqnarray*}
 \Sigma_{C,n}&=&\sum_{k_{d_1+1}\in\Natural} \frac{1}{N}\sum_{g\in G_{b,m}}
\frac{\wal_{k_{d_1+1}}(x_n^{(d_1+1)}(g))}{b^{\alpha_1\psi_b (k_{d_1+1})}}\\
&=&\sum_{\substack{k_{d_1+1}\in\Natural\\ k_{d_1+1}\equiv 0 (N)}}\frac{1}{N} \sum_{g\in G_{b,m}}
\frac{\wal_{k_{d_1+1}}(x_n^{(d_1+1)}(g))}{b^{\alpha_1\psi_b (k_{d_1+1})}}\\
&&+\sum_{\substack{k_{d_1+1}\in\Natural\\ k_{d_1+1}\not\equiv 0 (N)}}\frac{1}{N} \sum_{g\in G_{b,m}}
\frac{\wal_{k_{d_1+1}}(x_n^{(d_1+1)}(g))}{b^{\alpha_1\psi_b (k_{d_1+1})}}\\
&=:&\Sigma_{C,n,1}+\Sigma_{C,n,2}.
\end{eqnarray*}
By results in \cite{DKPS05}, 
$$\Sigma_{C,n,1}=\sum_{\substack{k_{d_1+1}\in\Natural\\ k_{d_1+1}\equiv 0 (N)}}\frac{1}{b^{\alpha_1\psi_b (k_{d_1+1})}}=
\frac{\mu(\alpha_1)}{b^{m\alpha}}\le\frac{\mu(\alpha_1)}{N}.$$
Furthermore,
\begin{eqnarray*}
 \Sigma_{C,n,2}&=&\sum_{\substack{k_{d_1+1}\in\Natural\\ k_{d_1+1}\not\equiv 0 (N)}}
\frac{1}{b^{\alpha_1\psi_b (k_{d_1+1})}}\frac{1}{N} \sum_{g\in G_{b,m}}
\wal_{k_{d_1+1}}(x_n^{(d_1+1)}(g)).
\end{eqnarray*}
Note that 
$$\sum_{g\in G_{b,m}}\wal_{k_{d_1+1}}(x_n^{(d_1+1)}(g))=\sum_{g\in G_{b,m}}\wal_{k_{d_1+1}}
\left(\nu_m \left(\frac{n(x)g(x)}{f(x)}\right)\right).$$
Since $n\neq 0$, we can write 
$$\Sigma_{C,n,2}=\sum_{\substack{k_{d_1+1}\in\Natural\\ k_{d_1+1}\not\equiv 0 (N)}}\frac{1}{b^{\alpha_1\psi_b (k_{d_1+1})}}\frac{1}{N}
\sum_{g\in G_{b,m}}\wal_{k_{d_1+1}} \left(\nu_m \left(\frac{g(x)}{f(x)}\right)\right).$$
Since $g$ takes on all values in $G_{b,m}$, and since $f$ is irreducible, we can simplify $\Sigma_{C,n,2}$ further to 
$$\Sigma_{C,n,2}=\sum_{\substack{k_{d_1+1}\in\Natural\\ k_{d_1+1}\not\equiv 0 (N)}}\frac{1}{b^{\alpha_1\psi_b (k_{d_1+1})}}\frac{1}{N}
 \sum_{g=0}^{b^m-1}\wal_{k_{d_1+1}}\left(\frac{g}{b^m}\right).$$
However, $ \sum_{g=0}^{b^m-1}\wal_{k_{d_1+1}}\left(\frac{g}{b^m}\right)=0$ and so $\Sigma_{C,n,2}=0$. This yields 
$$\abs{\Sigma_{C,n}}\le\frac{\mu(\alpha_1)}{N},\ \mbox{ and } \ \Sigma_C \le \frac{2\mu(\alpha_1)}{N}.$$
This implies 
\begin{align*}
\theta_{(d_1+1,d_2)}(g_{d_1+1})\le &  \frac{2\gamma_{d_1 +1}^{(1)}\mu(\alpha_1)}{N} \\
& \times \left[\prod_{j=1}^{d_1}\left(1+\gamma_j^{(1)}\mu(\alpha_1)\right)\right]
\left[\prod_{j=1}^{d_2}\left(1+\gamma_j^{(2)}2\zeta(\alpha_2)\right)\right].
\end{align*}
Combining the latter result with Equation \eqref{eqindassumption}, 
we obtain
\begin{eqnarray*}
\lefteqn{e^2_{(d_1+1,d_2),\bsalpha,\bsgamma}((\bsg_{d_1}^*,g_{d_1+1}),\bsz_{d_2}^*))}\\ &\le&
\frac{2}{N} 
\left[\prod_{j=1}^{d_1+1}\left(1+2\gamma_j^{(1)}\mu (\alpha_1)\right)\right] 
\left[\prod_{j=1}^{d_2}\left(1+\gamma_j^{(2)}4\zeta (\alpha_2)\right)\right].
\end{eqnarray*}

Let us now consider the case where we start from 
$(\bsg_{d_1}^*,\bsz_{d_2}^*)$ and add, by Algorithm \ref{algcbchybrid}, a component $z_{d_2+1}$ to $\bsz_{d_2}^*$.
According to Equation \eqref{eqerrexpression}, we have
\begin{eqnarray*}
e^2_{(d_1,d_2+1),\bsalpha,\bsgamma}(\bsg_{d_1}^*,(\bsz_{d_2}^*,z_{d_2+1}))=
e^2_{(d_1,d_2),\bsalpha,\bsgamma}(\bsg_{d_1}^*,\bsz_{d_2}^*)+\theta_{(d_1,d_2+1)}(z_{d_2+1}),
\end{eqnarray*}
where
\begin{eqnarray*}
\lefteqn{\theta_{(d_1,d_2+1)}(z_{d_2+1})}\\
&:=&\frac{\gamma_{d_2 +1}^{(2)}}{N^2} \sum_{n,n'=0}^{N-1} \left[\prod_{j=1}^{d_1}\left(1+\gamma_j^{(1)}\sum_{k_j\in\Natural}\frac{\wal_{k_j}(x_n^{(j)}(g_j)
\ominus x_{n'}^{(j)}(g_j))}{b^{\alpha_1\psi_b (k_j)}}\right)\right]\\
&&\times\left[\prod_{j=1}^{d_2}\left(1+\gamma_j^{(2)}\sum_{l_j\in\Integer\setminus\{0\}}\frac{\ee_{l_j}(y_n^{(j)}(z_j)
- y_{n'}^{(j)}(z_j))}{\abs{l_j}^{\alpha_2}}\right)\right]\\
&&\times\sum_{l_{d_2+1}\in\Integer\setminus\{0\}}\frac{\ee_{l_{d_2+1}}(y_n^{(d_2+1)}(z_{d_2+1})
- y_{n'}^{(d_2+1)}(z_{d_2+1}))}{\abs{l_{d_2+1}}^{\alpha_2}}.
\end{eqnarray*}
By the assumption, we know that 
\begin{equation}\label{eqindassumption2}
e^2_{(d_1,d_2),\bsalpha,\bsgamma}(\bsg_{d_1}^*,\bsz_{d_2}^*)\le \frac{2}{N} 
\left[\prod_{j=1}^{d_1}\left(1+\gamma_j^{(1)}2\mu (\alpha_1)\right)\right] 
\left[\prod_{j=1}^{d_2}\left(1+\gamma_j^{(2)}4\zeta (\alpha_2)\right)\right].
\end{equation}

Then, as $z_{d_2+1}$ was chosen to minimize the error,
\begin{eqnarray*}
\lefteqn{\theta_{(d_1,d_2+1)}(z_{d_2+1})\le\frac{1}{\phi(N)} \sum_{z\in Z_N} \theta_{(d_1,d_2+1)}(z)}\\
&=&\frac{\gamma_{d_2 +1}^{(2)}}{N^2} \sum_{n,n'=0}^{N-1} \left[\prod_{j=1}^{d_1}\left(1+\gamma_j^{(1)}\sum_{k_j\in\Natural}\frac{\wal_{k_j}(x_n^{(j)}(g_j)
\ominus x_{n'}^{(j)}(g_j))}{b^{\alpha_1\psi_b (k_j)}}\right)\right]\\
&&\times\left[\prod_{j=1}^{d_2}\left(1+\gamma_j^{(2)}\sum_{l_j\in\Integer\setminus\{0\}}\frac{\ee_{l_j}(y_n^{(j)}(z_j)
- y_{n'}^{(j)}(z_j))}{\abs{l_j}^{\alpha_2}}\right)\right]\\
&&\times\frac{1}{\phi(N)} \sum_{z\in Z_N}\sum_{l_{d_2+1}\in\Integer\setminus\{0\}}\frac{\ee_{l_{d_2+1}}(y_n^{(d_2+1)}(z)
- y_{n'}^{(d_2+1)}(z))}{\abs{l_{d_2+1}}^{\alpha_2}}\\
&\le& \frac{\gamma_{d_2 +1}^{(2)}}{N^2} \sum_{n,n'=0}^{N-1} \left[\prod_{j=1}^{d_1}\left(1+\gamma_j^{(1)}\mu(\alpha_1)\right)\right]
\left[\prod_{j=1}^{d_2}\left(1+\gamma_j^{(2)}2\zeta(\alpha_2)\right)\right]\\
&&\times\abs{\frac{1}{\phi(N)} \sum_{z\in Z_N}\sum_{l_{d_2+1}\in\Integer\setminus\{0\}}\frac{\ee_{l_{d_2+1}}(y_n^{(d_2+1)}(z)
- y_{n'}^{(d_2+1)}(z))}{\abs{l_{d_2+1}}^{\alpha_2}}}\\
&=&\gamma_{d_2 +2}^{(1)}\left[\prod_{j=1}^{d_1}\left(1+\gamma_j^{(1)}\mu(\alpha_1)\right)\right]
\left[\prod_{j=1}^{d_2}\left(1+\gamma_j^{(2)}2\zeta(\alpha_2)\right)\right]\\
&&\times\frac{1}{N^2}\sum_{n,n'=0}^{N-1}\abs{\frac{1}{\phi(N)} \sum_{z\in Z_N}\sum_{l_{d_2+1}\in\Integer\setminus\{0\}}\frac{\ee_{l_{d_2+1}}(y_n^{(d_2+1)}(z)
- y_{n'}^{(d_2+1)}(z))}{\abs{l_{d_2+1}}^{\alpha_2}}}
\end{eqnarray*}
Let us now deal with the term 
\begin{eqnarray*}
\Sigma_D&:=&\frac{1}{N^2}\sum_{n,n'=0}^{N-1}\abs{\frac{1}{\phi(N)} \sum_{z\in Z_N}\sum_{l_{d_2+1}\in\Integer\setminus\{0\}}\frac{\ee_{l_{d_2+1}}(y_n^{(d_2+1)}(z)
- y_{n'}^{(d_2+1)}(z))}{\abs{l_{d_2+1}}^{\alpha_2}}}\\
&=&\frac{1}{N^2}\sum_{n=0}^{N-1}\sum_{n'=0}^{N-1}\abs{\frac{1}{\phi(N)} \sum_{z\in Z_N}
\sum_{l_{d_2+1}\in\Integer\setminus\{0\}}\frac{\ee_{l_{d_2+1}}(y_{n-n'}^{(d_2+1)}(z))}{\abs{l_{d_2+1}}^{\alpha_2}}}\\
&=&\frac{1}{N}\sum_{n=0}^{N-1}\abs{\frac{1}{\phi(N)} \sum_{z\in Z_N}
\sum_{l_{d_2+1}\in\Integer\setminus\{0\}}\frac{\ee_{l_{d_2+1}}(y_n^{(d_2+1)}(z))}{\abs{l_{d_2+1}}^{\alpha_2}}}\\
&=&\frac{1}{N}\sum_{n=1}^{N}\abs{\frac{1}{\phi(N)} \sum_{z\in Z_N}
\sum_{l_{d_2+1}\in\Integer\setminus\{0\}}\frac{\ee_{l_{d_2+1}}(y_n^{(d_2+1)}(z))}{\abs{l_{d_2+1}}^{\alpha_2}}},\\
\end{eqnarray*}
where we again used that the inner sum in the second line always has the same value. We now write
\begin{eqnarray*}
 \Sigma_D=\frac{1}{N}\sum_{n=1}^{N}\abs{\frac{1}{\phi(N)} \sum_{z\in Z_N}
\sum_{l_{d_2+1}\in\Integer\setminus\{0\}}\frac{\ee^{2\pi\icomp nzl_{d_2+1}/N}}{\abs{l_{d_2+1}}^{\alpha_2}}}.
\end{eqnarray*}
Applying \cite[Lemmas 2.1 and 2.3]{KJ02} again yields
$$\Sigma_D\le \frac{4\zeta(\alpha_2)}{N},$$
which, in turn, implies 
$$\theta_{(d_1,d_2+1)}(z_{d_2+1})\le 
\frac{\gamma_{d_2 +2}^{(1)}4\zeta(\alpha)}{N}\left[\prod_{j=1}^{d_1}\left(1+\gamma_j^{(1)}\mu(\alpha_1)\right)\right]
\left[\prod_{j=1}^{d_2}\left(1+\gamma_j^{(2)}2\zeta(\alpha_2)\right)\right].$$
Combining the latter result with Equation \eqref{eqindassumption2}, 
we obtain
\begin{eqnarray*}
\lefteqn{e^2_{(d_1,d_2+1),\bsalpha,\bsgamma}(\bsg_{d_1}^*,(\bsz_{d_2}^*,z_{d_2+1}))}\\ & \le &
\frac{2}{N} 
\left[\prod_{j=1}^{d_1}\left(1+2\gamma_j^{(1)}\mu (\alpha_1)\right)\right] 
\left[\prod_{j=1}^{d_2+1}\left(1+\gamma_j^{(2)}4\zeta (\alpha_2)\right)\right].
\end{eqnarray*}
The result follows.
\end{proof}


\end{document}